\documentclass[12pt,reqno,b5paper]{amsart}
\usepackage{amsmath, amsthm}
\usepackage{latexsym}
\usepackage{epsfig}
\usepackage{amsfonts}
\usepackage{amssymb}
\usepackage{graphicx}
\usepackage{hyperref}

\setcounter{MaxMatrixCols}{10}

\setlength{\textwidth}{12.5cm}
\setlength{\textheight}{19cm}

\newtheorem{thr}{Theorem}[section]

\newtheorem{lem}{Lemma}[section]

%
%
\newcommand{\F}{\mathbb{F}}
\newcommand{\K}{\mathbb{K}}
\newcommand{\Z}{\mathbb{Z}}

\begin{document}

%
%
%
%
%

\bigskip
\title[Swan-like reducibility for Type I pentanomials $\cdots$]{Swan-like reducibility for Type I pentanomials over a binary field}
\author[Ryul Kim, Su-Yong Pak and Myong-Son Sin]{Ryul Kim, Su-Yong Pak and Myong-Son Sin}
\maketitle

\textbf{Abstract.}
Swan (Pacific J. Math. 12(3) (1962), 1099-1106) characterized the parity of the number of irreducible factors of 
trinomials over $\F_2$. Many researchers have recently obtained Swan-like results 
on determining the reducibility of polynomials over finite fields. 
In this paper, we determine the parity of the number of irreducible factors for so-called 
Type I pentanomial $f(x)=x^m+x^{n+1}+x^n+x+1$ over $\F_2$ with even $n$. 
Our result is based on the Stickelberger-Swan theorem and Newton's formula which 
is very useful for the computation of the discriminant of a polynomial.\\

\noindent \textbf{Keywords and phrases:} finite field, type I pentanomial, discriminant, resultant.\newline
\textbf{(2010)Mathematics Subject Classification:} 11T06, 11T55, 12E05 \newline
%
%

\section{Introduction}
Irreducible polynomials over a finite field $\F_2$ with a small number of nonzero terms 
are important in many applications such as coding theory and cryptography 
because such polynomials provide with an efficient
implementation of field arithmetic in the field extension $\F_{2^m}$. But an irreducible trinomial
over $\F_2$ for every given degree does not always exist and a conjecture
whether there exists an irreducible pentanomial of degree $m$ over $\F_2$ for each
$m \geq 4$ still remains open.
	
On the other hand, characterization of the parity of the number of irreducible factors is meaningful for determining
the reducibility of a given polynomial. Since a polynomial is reducible if it has an even number of
irreducible factors, study on the parity of this number can give a necessary (but not sufficient) condition for irreducibility. Using a classical result of Stickelberger 
\cite{sti}, Swan \cite{swa} determined the parity of the number 
of irreducible factors of trinomials over $\F_2$. 
Swan's theorem relates the discriminant of a polynomial with its number of irreducible factors. 
	
Many Swan-like results have recently been obtained for several types of polynomials over finite fields. 
Vishne \cite{vis} extended the Swan's theorem to trinomials over an extension of $\F_2$. 
Hales and Newhart \cite{hal} gave a Swan-like result for binary tetranomials and 
Bluher \cite{blu} presented a similar result for binary polynomials of the 
form $x^n+\sum_{i \in S}x^i+1$, where 
$S \subset \{i : i ~ \text{odd}, 0<i<n/3 \} \bigcup \{i : i \equiv n \pmod{4}, 0<i<n \}$. 
Ahmadi and Menezes \cite{ahm1} obtained a Swan-like result for binary maximum weight polynomials.  
Zhao and Cao \cite{zha} considered the reducibility of binary affine polynomials. 
Some Swan-like results related to the reducibility of polynomials 
over finite fields of odd characteristic have been obtained, see \cite{ahm2, han, kim, loi, von}.
	
Types I and II pentanomials over $\F_2$ were firstly introduced in \cite{rod} as follows.
\begin{eqnarray*}
&& \textrm{Type I : } x^{m}+x^{n+1}+x^{n}+x+1, ~~~\textnormal{where}~~
	2 \leq n \leq \lfloor m/2 \rfloor -1 \\
&& \textrm{Type II : } x^{m}+x^{n+2}+x^{n+1}+x^{n}+1, ~~~ \textnormal{where} ~~\textrm{ }
	1 \leq n \leq \lfloor m/2 \rfloor -1 
\end{eqnarray*}
The authors proposed parallel multiplier architectures based on these special irreducible pentanomials 
and gave rigorous analyses of their space and time complexity.
Though these two types of irreducible pentanomials are abundant, they do not exist for each given degree. 
	
Koepf and Kim \cite{koe} determined the parity of the number of irreducible factors for 
Type II pentanomials over $\F_2$ with even degrees. In this work we consider the same problem for  
Type I pentanomials over $\F_2$ with even $n$ using the Stickelberger-Swan theorem and Newton's formula.
In Sect. 2 we present some preliminary results related to 
the parity of the number of irreducible factors of polynomials over finite fields. 
In Sect. 3 we determine the parity of the number of irreducible factors of Type I polynomials over 
$\F_2$ with even $n$ and In Sect. 4 we conclude.

%
%
%
%

\section{Preliminaries}
Let $\K$ be a field and let $f(x)= a \prod_{i=0}^{m-1}(x-x_i) \in \K[x]$, where $x_0, \cdots, x_{m-1}$
are the roots of $f(x)$ in an extension of $\K$. Then the \textit{discriminant} $D(f)$ of $f(x)$
is defined by
\begin{equation*}
D(f)=a^{2m-2} \prod_{0\leq i<j < m}(x_i-x_j)^2.
\end{equation*}
It is obvious from the definition of $D(f)$ that $f(x)$ has a repeated root if and only if $D(f)=0$. 
Although the discriminant is defined in terms of elements of an extension of $\K$, it is actually an element of $\K$ itself.
%
%
The following theorem, called the Stickelberger-Swan theorem, relates the parity of the number 
of irreducible factors of a polynomial with its discriminant.
\begin{thr} \textnormal{\cite{ahm2, swa}} \label{thr2.1}
Suppose that the polynomial $f(x) \in \F_2[x]$ of degree $m$
has no repeated roots and let $r$ be a number of irreducible factors of $f(x)$ over $\F_2$.
Let $F(x) \in \Z[x]$ be any monic lift of $f(x)$ to the integers. Then $D(F) \equiv 1$ or $5 \pmod{8}$, 
and more importantly, $r \equiv m \pmod{2}$ if and only if $D(F) \equiv 1 \pmod{8}$.
\end{thr}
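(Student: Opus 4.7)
The plan is to work $2$-adically: lift the factorization of $f$ over $\F_2$ to a factorization of $F$ over $\Z_2$, use multiplicativity of the discriminant, and reduce everything to a local claim about each irreducible factor. Since $f$ has no repeated roots over $\F_2$, its factorization $f = f_1 \cdots f_r$ into distinct monic irreducibles satisfies the hypothesis of Hensel's lemma, so $F$ factors as $F = F_1 \cdots F_r$ in $\Z_2[x]$ with each $F_i$ monic irreducible and reducing to $f_i$. Writing $d_i = \deg f_i = \deg F_i$, one has $\sum d_i = m$ and $m - r = \sum (d_i - 1)$, so the parity of $m-r$ equals the number of indices $i$ with $d_i$ even, taken modulo $2$.

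Next I would invoke the standard resultant identity
\begin{equation*}
D(F) \;=\; \prod_{i} D(F_i) \;\cdot\; \prod_{i < j} \operatorname{Res}(F_i, F_j)^{2}.
\end{equation*}
Because the $f_i$ are pairwise coprime in $\F_2[x]$, each resultant $\operatorname{Res}(F_i, F_j)$ reduces to a nonzero element of $\F_2$ and is therefore a unit in $\Z_2$; since any $2$-adic unit $u$ satisfies $u^{2} \equiv 1 \pmod 8$, the cross terms disappear modulo $8$ and $D(F) \equiv \prod_{i} D(F_i) \pmod 8$.

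The crux of the proof, which I expect to be the main obstacle, is the following local lemma: for a monic irreducible $G \in \Z_2[x]$ whose reduction is irreducible of degree $d$ over $\F_2$, one has $D(G) \equiv 1 \pmod 8$ when $d$ is odd and $D(G) \equiv 5 \pmod 8$ when $d$ is even. Such a $G$ generates the unique unramified extension $K$ of $\mathbb{Q}_2$ of degree $d$, and in the unramified setting $\Z_2[\alpha] = \mathcal{O}_K$ for any root $\alpha$, so $D(G)$ equals the field discriminant and is a unit in $\Z_2$, forcing $D(G) \bmod 8 \in \{1,3,5,7\}$. To pin down the exact value, I would realize the roots of $G$ as the Frobenius orbit $\alpha, \alpha^{2}, \dots, \alpha^{2^{d-1}}$ of the Teichm\"uller lift of a root of $\bar G$ and analyze $\prod_{0 \le s < t < d}(\alpha^{2^{s}}-\alpha^{2^{t}})^{2}$ modulo $8$. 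The cases $d = 1$ (empty product, $D = 1$) and $d = 2$ (polynomial $x^{2}+x+1$, discriminant $-3 \equiv 5$) confirm the claim and guide the general calculation; invariance of $D(G) \bmod 8$ under changes of lift (which must hold if the theorem is to make sense) can be checked independently from the resultant identity applied to $G$ and $G+2H$.

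Combining the two ingredients yields
\begin{equation*}
D(F) \equiv \prod_i D(F_i) \equiv 5^{\,\#\{i \,:\, d_i \text{ even}\}} \pmod 8,
\end{equation*}
which is $\equiv 1 \pmod 8$ if and only if $\#\{i : d_i \text{ even}\}$ is even, equivalently $m \equiv r \pmod 2$. In particular $D(F)$ is always $1$ or $5$ modulo $8$, giving both assertions of the theorem at once.
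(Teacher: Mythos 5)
The paper does not actually prove Theorem 2.1 --- it is quoted from Swan and Ahmadi--Vega --- so your proposal can only be measured against the classical argument, and its architecture you reproduce correctly: Hensel-lift the factorization $f=f_1\cdots f_r$ to $F=F_1\cdots F_r$ in $\Z_2[x]$, use $D(\prod_i F_i)=\prod_i D(F_i)\cdot\prod_{i<j}\operatorname{Res}(F_i,F_j)^2$, observe that the square of a $2$-adic unit is $1 \bmod 8$, and reduce the parity claim to counting the factors of even degree. All of that is sound.

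The gap sits exactly where you predicted it: the local lemma. For even $d$ you have verified only $d=2$, and your plan for general $d$ has two weak joints. First, the roots of an arbitrary monic irreducible lift $G$ are not the powers $\alpha^{2^s}$ of a Teichm\"uller element; they are $\sigma^s(\alpha)$ for the Frobenius $\sigma$ of the unramified extension $K/\mathbb{Q}_2$, and $\sigma(\alpha)=\alpha^2$ holds only for the Teichm\"uller lift. So your computation would cover one particular lift of $\bar G$, after which you genuinely need invariance of $D\bmod 8$ under change of lift. Second, that invariance is not a formal consequence of ``the resultant identity applied to $G$ and $G+2H$'': perturbing the coefficients by multiples of $2$ a priori changes $\operatorname{Res}(G,G')$ by a multiple of $2$, not of $8$, and upgrading this to mod $8$ requires a Galois-invariance argument of the same depth as the lemma itself. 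The standard way to avoid both problems is to treat an arbitrary lift directly: set $\delta=\prod_{s<t}(\sigma^s\alpha-\sigma^t\alpha)$, so $D(G)=\delta^2$, note $\sigma(\delta)=(-1)^{d-1}\delta$ and $\delta\equiv 1 \pmod{2\mathcal{O}_K}$ (its reduction is a nonzero Frobenius-invariant element of the residue field, hence equals $1$), and write $\delta=1+2\gamma$, so that $D(G)=1+4\gamma(1+\gamma)$ with $\gamma(1+\gamma)\in\Z_2$. For $d$ odd, $\delta\in\Z_2$, so $\gamma\in\Z_2$ and $\gamma(1+\gamma)$ is even, giving $D(G)\equiv 1\pmod 8$; for $d$ even, $\sigma(\delta)=-\delta$ forces $\sigma(\gamma)+\gamma=-1$, hence $\bar\gamma^{\,2}+\bar\gamma=1$ and $D(G)\equiv 5\pmod 8$. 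With that substitution for your Teichm\"uller computation, your outline becomes a complete proof.
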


Let $g(x)= b \prod_{j=0}^{n-1}(x-y_j) \in \K[x]$, where $y_0,\cdots,y_{n-1}$ are the roots of $g(x)$ in an extension of $\K$.
The \textit{resultant} $R(f, g)$  of $f(x)$ and $g(x)$ is
\begin{equation*}
R(f,g)=(-1)^{mn}b^m \prod_{j=0}^{n-1}f(y_j) = a^n \prod_{i=0}^{m-1}g(x_i).
\end{equation*}
There is an important relation between the discriminant and the resultant given by
\begin{equation*} 
D(f)=(-1)^{m(m-1)/2} R(f,f'),
\end{equation*}
where $f'(x)$ denotes the derivative of $f(x)$ with respect to $x$. This implies the following lemma.
%
%
\begin{lem} \textnormal{\cite{han}} \label{lem2.1}
An alternate formula for the discriminant of a monic polynomial $f(x)$ is
\begin{equation*}
D(f)=(-1)^{m(m-1)/2} \prod_{i=0}^{m-1}f'(x_i).
\end{equation*}
\end{lem}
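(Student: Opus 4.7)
The plan is to deduce the formula directly by combining two identities that the excerpt has already put on the table: the discriminant-resultant relation
\[
D(f)=(-1)^{m(m-1)/2}R(f,f'),
\]
and the product form of the resultant
\[
R(f,g)=a^{n}\prod_{i=0}^{m-1}g(x_i),
\]
where $a$ is the leading coefficient of $f$, $n=\deg g$, and $x_0,\dots,x_{m-1}$ are the roots of $f$.

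First I would specialize the resultant formula to $g=f'$. Since $f$ is monic by hypothesis, the factor $a^{n}$ collapses to $1^{m-1}=1$, leaving
\[
R(f,f')=\prod_{i=0}^{m-1}f'(x_i).
\]
Substituting this into the discriminant-resultant identity immediately yields
\[
D(f)=(-1)^{m(m-1)/2}\prod_{i=0}^{m-1}f'(x_i),
\]
which is the claimed alternate formula.

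There is essentially no obstacle: the only subtle point worth flagging is the implicit use of monicity to kill the leading coefficient $a^{n}$ in the resultant formula, and the fact that we use the second of the two equivalent expressions for $R(f,g)$ (the one indexed over the roots of $f$) rather than the first (indexed over roots of $g$), so we do not need to know or manipulate the roots of $f'(x)$. Consequently the proof is a two-line substitution, and no additional computation or case analysis is required.
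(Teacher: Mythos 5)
Your proposal is correct and follows exactly the route the paper intends: the paper states the identity $D(f)=(-1)^{m(m-1)/2}R(f,f')$ and remarks ``This implies the following lemma,'' i.e.\ one substitutes the product form $R(f,f')=a^{m-1}\prod_{i}f'(x_i)$ with $a=1$ by monicity. Your two-line substitution, including the observation that one should use the expression for the resultant indexed over the roots of $f$ rather than of $f'$, is precisely the argument.
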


Let
\begin{equation*} 
f(x)=x^m+a_1x^{m-1}+\cdots+a_m=\prod_{i=0}^{m-1}(x-x_i) \in \K[x].
\end{equation*}
It is well known that the coefficients $a_k$ of $f(x)$ are
the elementary symmetric polynomials of $x_i$ :
\begin{equation*} 
a_k=(-1)^k \sum_{0 \leq i_1<i_2\cdots<i_k<m} x_{i_1} x_{i_2} \cdots x_{i_k}
\end{equation*}
for $1\leq k<m$. Since each $a_k \in \K$, it follows that
$S(x_0, \cdots, x_{m-1}) \in \K$ for every symmetric polynomial
$S \in \K[x_0, \cdots, x_{m-1} ]$. For any integers $p,q$ and $k(0\leq k<m)$, let
\begin{eqnarray*} 
S_{(k,p)} & = & \sum_{\substack{0 \leq i_1,\cdots ,i_k \leq m-1 \\ i_j \neq i_l}}
	x_{i_1}^p \cdots x_{i_k}^p, \\
S_{[k,p]} & = & \sum_{0 \leq i_1<i_2<\cdots<i_k \leq m-1}
	x_{i_1}^p \cdots x_{i_k}^p, \\
S_{p,q} & = & \sum_{\substack{0 \leq i,j \leq m-1 \\ i \neq j}}
	x_i^p x_j^q
\end{eqnarray*}
We denote $S_{(1,p)}=S_{[1,p]}$ simply as $S_p$ and put $S_{(0,p)}=S_{[0,p]}=1$. 
Then the following lemma holds true.
%
%
\begin{lem} \textnormal{\cite{ahm1, koe}} \label{lem2.2}

\textnormal{(1)} $S_0 = S_{(1,0)}=S_{[1,0]}=m$

\textnormal{(2)} $S_{p,q}=S_p \cdot S_q-S_{p+q}$

\textnormal{(3)} $S_{(k,p)}=k! \cdot S_{[k,p]}$
\end{lem}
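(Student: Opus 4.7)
The plan is to verify the three parts of the lemma directly from the definitions of $S_p$, $S_{p,q}$, $S_{(k,p)}$, and $S_{[k,p]}$ given in the preceding paragraph; each part is an elementary identity about symmetric sums, and the proofs consist of nothing more than unwinding the indexing conventions and splitting a double sum in the right way.

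For part (1), I would simply substitute $p=0$ (and $k=1$) in the three definitions: in each of $S_0$, $S_{(1,0)}$, $S_{[1,0]}$, the summand is $1$ and one sums over all $m$ indices $i \in \{0,1,\ldots,m-1\}$, so each quantity equals $m$. For part (2), the natural move is to expand the product
\begin{equation*}
S_p \cdot S_q = \Bigl(\sum_{i=0}^{m-1} x_i^p\Bigr)\Bigl(\sum_{j=0}^{m-1} x_j^q\Bigr) = \sum_{i,j=0}^{m-1} x_i^p x_j^q,
\end{equation*}
and then split the double sum according to whether $i=j$ or $i \neq j$. The diagonal part contributes $\sum_i x_i^{p+q} = S_{p+q}$, and the off-diagonal part is exactly $S_{p,q}$ by definition. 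Rearranging yields $S_{p,q} = S_p \cdot S_q - S_{p+q}$.

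For part (3), the key observation is that the summand $x_{i_1}^p x_{i_2}^p \cdots x_{i_k}^p$ is symmetric under any permutation of $(i_1,\ldots,i_k)$. Each unordered $k$-subset $\{j_1,\ldots,j_k\}$ of $\{0,1,\ldots,m-1\}$ therefore contributes the same monomial to $S_{(k,p)}$ from each of its $k!$ orderings, while it contributes once to $S_{[k,p]}$ (the ordering being the strictly increasing one). Grouping the terms of $S_{(k,p)}$ by their underlying subset gives $S_{(k,p)} = k! \cdot S_{[k,p]}$.

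I do not expect any real obstacle here: the lemma is a bookkeeping statement rather than a substantive theorem, so the only thing to be careful about is matching the exact indexing conventions (ordered tuples of distinct indices versus strictly increasing tuples) introduced just above the statement. The main use of the lemma later will be in expressing power sums of the roots in terms of elementary symmetric functions when computing the discriminant via Newton's identities, so the form of (2) and (3) is what matters, not any subtlety in the proof.
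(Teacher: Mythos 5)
Your verification is correct, and all three parts are handled exactly as one would expect: part (1) is immediate from the definitions together with the paper's convention $S_0 = S_{(1,0)} = S_{[1,0]}$, part (2) follows from splitting $S_p\cdot S_q = \sum_{i,j} x_i^p x_j^q$ into the diagonal ($i=j$) and off-diagonal ($i\neq j$) parts, and part (3) from grouping the $k!$ orderings of each $k$-subset of distinct indices. The paper itself gives no proof of this lemma --- it is simply quoted from \cite{ahm1, koe} --- so there is no argument to compare against; your elementary unwinding of the indexing conventions is the standard (and essentially the only) proof, and it is complete.
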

%
%

Newton's formula relates the coefficients $a_k$ with the power sums $S_k$.
\begin{thr} \textnormal{\cite{lid}} \label{thr2.2}
Let $f(x), S_p$ and $x_0,x_1,\cdots,x_{m-1}$ be as above. Then for any $p \geq 1$,
\begin{equation*} 
S_p+S_{p-1}a_1+S_{p-2}a_2+ \cdots +S_{p-n+1}a_{n-1}+\frac{n}{m} S_{p-n}a_{n}=0  
\end{equation*}
where $n=\textnormal{min} \{p,m\}$.
\end{thr}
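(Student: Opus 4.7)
The plan is to prove Newton's identity via the logarithmic derivative of $f(x)$, working formally in the Laurent series ring $\K(\!(1/x)\!)$. Starting from $f(x)=\prod_{i=0}^{m-1}(x-x_i)$, I would first write
$$\frac{f'(x)}{f(x)}=\sum_{i=0}^{m-1}\frac{1}{x-x_i},$$
and then expand each summand as $\frac{1}{x-x_i}=\sum_{k\geq 0} x_i^k\,x^{-k-1}$. Summing over $i$ collapses the inner sums into the power sums, giving the key identity
$$\frac{f'(x)}{f(x)}=\sum_{k\geq 0}\frac{S_k}{x^{k+1}}.$$

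Next, I would clear the denominator to obtain $f'(x)=f(x)\sum_{k\geq 0}S_k x^{-k-1}$ and compare coefficients of $x^{m-p-1}$ on both sides. Writing $f(x)=\sum_{j=0}^{m}a_j x^{m-j}$ with $a_0=1$, the coefficient on the left equals $(m-p)a_p$ for $0\leq p\leq m-1$ and $0$ for $p\geq m$. On the right the coefficient is $\sum_{j=0}^{\min(p,m)}a_j S_{p-j}$, where the upper bound is forced by $0\leq j\leq m$ together with $p-j\geq 0$.

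The final step is a case split on $n=\min\{p,m\}$. If $p<m$ so $n=p$, then using $S_0=m$ from Lemma \ref{lem2.2}(1) the equation $\sum_{j=0}^{p}a_j S_{p-j}=(m-p)a_p$ rearranges to $S_p+a_1S_{p-1}+\cdots+a_{p-1}S_1+p\,a_p=0$, which matches the statement since $\tfrac{p}{m}S_0 a_p=p\,a_p$. If $p\geq m$ so $n=m$, the equation $\sum_{j=0}^{m}a_j S_{p-j}=0$ is exactly the stated formula, because $\tfrac{m}{m}S_{p-m}a_m=S_{p-m}a_m$. Thus both cases are captured uniformly by the single expression in the theorem.

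The main obstacle is purely notational: organizing the coefficient comparison so that the two regimes $p<m$ and $p\geq m$ produce the same compact formula, with the fraction $n/m$ absorbing the appearance of $S_0=m$ in the small-$p$ case. The potentially worrying step, the ``geometric series'' expansion of $1/(x-x_i)$, requires no analytic justification at all: it is an identity in $\K(\!(1/x)\!)$ that one checks termwise from $(x-x_i)\sum_{k\geq 0}x_i^k x^{-k-1}=1$. Once this formal framework is fixed, the remainder of the argument is straightforward bookkeeping.
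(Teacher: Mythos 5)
Your proof is correct and complete. Note that the paper gives no proof of this theorem at all --- it is quoted from Lidl and Niederreiter \cite{lid} --- so there is no internal argument to compare against; your logarithmic-derivative computation in $\K(\!(1/x)\!)$ is the standard derivation of Newton's identities. The key identity $f'(x)/f(x)=\sum_{k\ge 0}S_k x^{-k-1}$ is justified termwise exactly as you say, and the coefficient comparison at $x^{m-p-1}$ correctly yields $(m-p)a_p=\sum_{j=0}^{\min(p,m)}a_jS_{p-j}$, from which both regimes $n=p$ and $n=m$ of the stated formula follow, with $\frac{n}{m}S_{0}a_n=na_n$ absorbing the extra $ma_p$ as you describe. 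The one caveat worth recording is that the fraction $n/m$ in the statement is purely formal (it only ever appears multiplied by $S_0=m$, so it cancels); this matters in principle when $\K$ has positive characteristic dividing $m$, but in this paper the theorem is only applied to monic lifts over $\Z$, so nothing breaks.
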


The \textit{reciprocal polynomial} \cite{lid} of $f(x)=a_0x^m+a_1x^{m-1}+ \cdots +a_{m-1}x+a_m$ 
with $a_0 \neq 0$ over a finite field $\F_q$ is defined by
\begin{equation*} 
f^{*}(x):=x^mf \Big( \frac{1}{x} \Big)=a_mx^m+a_{m-1}x^{m-1}+ \cdots +a_1x+a_0 \in \F_q[x].
\end{equation*}

%
%
%
%
\section{The reducibility of Type I pentanomials over $\F_2$}
In this section, we will determine the parity of the number of irreducible factors
for the polynomial 
\begin{equation} 
f(x)=x^m+x^{n+1}+x^n+x+1 \in \F_2[x] \label{eq1}
\end{equation}
where $n$ is even and $2 \leq n \leq \lfloor m/2 \rfloor -1$.
First we test if these polynomials have repeated roots.
%
%
\begin{lem} \label{lem3.1}
The polynomial $f(x)$ in (1) has no repeated roots.
\end{lem}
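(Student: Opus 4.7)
The plan is to verify that $f$ and its formal derivative $f'$ share no common root in the algebraic closure of $\F_2$; since this is equivalent to $\gcd(f,f')=1$, it rules out repeated roots. Working in characteristic two and exploiting that $n$ is even, I would first compute
\begin{equation*}
f'(x) = m x^{m-1} + (n+1) x^n + n x^{n-1} + 1 = m x^{m-1} + x^n + 1,
\end{equation*}
since $n x^{n-1}$ vanishes and $(n+1) x^n$ reduces to $x^n$. The shape of $f'$ then depends only on the parity of $m$, so I would split the argument into two cases.

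When $m$ is even, $f'(x) = x^n + 1$. Any common root $\alpha$ of $f$ and $f'$ satisfies $\alpha^n = 1$, and substituting this into the middle three terms of $f(\alpha)$ gives $\alpha^{n+1} + \alpha^n + \alpha = \alpha + 1 + \alpha = 1$, which cancels the constant term. Hence $f(\alpha) = \alpha^m$, forcing $\alpha = 0$, contradicting $f(0) = 1$.

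When $m$ is odd, $f'(x) = x^{m-1} + x^n + 1$. Multiplying $f'(\alpha) = 0$ by $\alpha$ yields $\alpha^m + \alpha^{n+1} + \alpha = 0$, and adding this relation to $f(\alpha) = 0$ in characteristic two collapses $f(\alpha)$ down to $\alpha^n + 1 = 0$, i.e.\ $\alpha^n = 1$. Feeding this back into $f'(\alpha) = 0$ gives $\alpha^{m-1} = 0$, so again $\alpha = 0$, contradicting $f(0) = 1$.

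The calculation is elementary and I do not anticipate any genuine obstacle; the only point that requires care is tracking the parity simplifications in $\F_2$, and in particular exploiting the hypothesis that $n$ is even so that only three terms of $f'$ survive, making both cases short algebraic substitutions rather than an inspection of $\gcd(f,f')$ via Euclidean division.
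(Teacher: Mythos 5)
Your argument is correct and follows essentially the same route as the paper: both compute $f'(x)=mx^{m-1}+x^n+1$ using the evenness of $n$, split on the parity of $m$, and reduce a putative common root to satisfying $\alpha^n=1$ and then $\alpha=0$, contradicting $f(0)=1$. The paper phrases the two reductions as $\gcd(f,f')=\gcd(x^m,x^n+1)$ and $\gcd(x^n+1,x^{m-1})$ respectively; your substitutions at the level of roots are exactly the computations that justify those gcd identities.
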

\begin{proof}
The derivative of $f(x)$ is
\begin{eqnarray*}
f'(x) & = & mx^{m-1}+(n+1)x^n+nx^{n-1}+1 = mx^{m-1}+x^n+1 \\
	& = & \left\{ \begin{array}{ll}
		x^n+1, & m \equiv 0 \pmod{2} \\
		x^{m-1}+x^n+1, & m \equiv 1 \pmod{2}
	\end{array} \right.
\end{eqnarray*}
since $n$ is even. Therefore,
\begin{eqnarray*}
\textnormal{gcd}(f,f') & = & \left\{ \begin{array}{ll}
		\textnormal{gcd}(x^m, x^n+1), & m \equiv 0 \pmod{2} \\
		\textnormal{gcd}(x^n+1, x^{m-1}), & m \equiv 1 \pmod{2}
	\end{array} \right. \\
	& = & 1
\end{eqnarray*}
and $f(x)$ has no repeated roots. 
\end{proof}
\bigskip
Now let $F(x) \in \Z[x]$ be the monic lift of $f(x)$ to the integers. 
By Lemma 2.1, its discriminant is
\small
\begin{eqnarray*}
D(F) & = & (-1)^{m(m-1)/2}R(F,F') = (-1)^{m(m-1)/2} \prod_{i=0}^{m-1}F'(r_i) \\
	& = & (-1)^{m(m-1)/2} \prod_{i=0}^{m-1}\big(mr_i^{m-1}+(n+1)r_i^n+nr_i^{n-1}+1\big),
\end{eqnarray*}
\normalsize
where $r_i$'s are the roots of $F(x)$ in some extension of the rational numbers. To determine the 
parity of the number of irreducible factors of $f(x)$ by using Theorem 2.1, we need to compute $D(F)$ 
modulo $8$ which depends on the parity of the coefficients $m, n, n+1, 1$. It is not so desirable to 
compute $D(F)$ directly because at least two of these coefficients are odd.	

Let $h(x)$ be an arbitrary polynomial over $\F_2$. Then it is clear that $h^{*}(x)$, 
the reciprocal polynomial of $h(x)$ has the same degree as $h(x)$ and the same number of irreducible
factors as $h(x)$ since the reciprocal polynomial of an irreducible polynomial is also
irreducible. Let $p(x)$ be an irreducible polynomial of odd degree over $\F_2$ which divides
neither $h(x)$ nor $h^{*}(x)$. If we multiply $h(x)$ by $p(x)$, then both the parity of the number
of irreducible factors and the parity of the degree change. The same holds true for
$h^{*}(x)p(x)$. Using this fact, plus Stickelberger-Swan theorem, we
immediately get the following lemma.
%
%
\begin{lem} \label{lem3.2}
Let $h(x)$ be a polynomial over $\F_2$ which has not repeated roots and $p(x)$ be an irreducible 
polynomial of odd degree over $\F_2$ which divides neither $h(x)$ nor $h^*(x)$. 
Then the discriminants modulo 8 of monic lifts of the polynomials $h(x), h^*(x), p(x)h(x), p(x)h^*(x)$ are same.
\end{lem}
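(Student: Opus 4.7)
The plan is to reduce everything to Theorem \ref{thr2.1} (Stickelberger--Swan), which says that for a polynomial with no repeated roots the discriminant of its monic integer lift is $\equiv 1 \pmod 8$ or $\equiv 5 \pmod 8$, and the choice between these two residues is controlled by whether $r \equiv m \pmod 2$. So to show that the four discriminants are congruent mod $8$, it suffices to show that in each of the four cases the parity of (degree) $\equiv$ parity of (number of irreducible factors) is the same predicate.

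First I would check the hypotheses so that Theorem \ref{thr2.1} applies to all four polynomials. Since $h$ has no repeated roots and irreducibility of $p$ over the perfect field $\F_2$ forces $p$ to be separable, the only way $ph$ could have a repeated root is if $\gcd(p,h)\neq 1$; this is ruled out by the hypothesis that $p \nmid h$. The same works for $h^*$: because taking the reciprocal permutes the irreducible factors (the reciprocal of an irreducible polynomial with nonzero constant term is irreducible, and if $0$ is a root of $h$ it is a simple root so $h^*$ is just the reciprocal of $h/x$), $h^*$ has no repeated roots and has the same degree and the same number of irreducible factors as $h$. Consequently $ph^*$ also has no repeated roots, by the same coprimality argument.

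Next I would track parities. By Stickelberger--Swan together with the previous paragraph, $h$ and $h^*$ have discriminants whose mod-$8$ residues are determined by the same predicate $r\equiv m \pmod 2$, hence $D(H)\equiv D(H^*)\pmod 8$. For the passage $h \mapsto ph$, multiplication by $p$ adds $\deg p$ to the degree and $1$ to the number of irreducible factors. Since $\deg p$ is odd, both parities flip simultaneously, so the equivalence ``$r\equiv m \pmod 2$'' holds for $ph$ iff it holds for $h$. Therefore $D(PH)$ and $D(H)$ lie in the same residue class mod $8$. The same argument, applied to $h^*$ in place of $h$, gives $D(PH^*)\equiv D(H^*)\pmod 8$. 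Chaining these three congruences yields the statement.

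The step requiring the most care is really just the verification that $h^*$ and the two products have no repeated roots, which is what makes Theorem \ref{thr2.1} applicable; once that is in place, the rest is the clean parity-flipping bookkeeping above, with no substantive obstacle.
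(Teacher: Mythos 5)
Your proof is correct and follows essentially the same route as the paper: reduce everything to the Stickelberger--Swan criterion and observe that passing to $h^*$ preserves both the degree and the number of irreducible factors, while multiplying by an odd-degree irreducible $p$ flips both parities simultaneously, so the predicate $r\equiv m \pmod 2$ is unchanged in all four cases. Your explicit verification that all four polynomials are separable (and your handling of the case $h(0)=0$) is a bit more careful than the paper, which states the parity bookkeeping and invokes the theorem without further comment.
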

%
%
%
%
\subsection{The case of odd degree}
Let the degree $m$ of $f(x)$ in (1) be odd and assume $n\geq 4$. The case of $n=2$ will be considered later. 
The monic lift of the polynomial $(x+1)f^{*}(x)$ to the integers is
\begin{equation*}
K(x):=x^{m+1}+x^{m-1}+x^{m-n+1}+x^{m-n-1}+x+1 \in \Z[x].\\
\end{equation*}
If $x_i$'s are the roots of $K(x)$ in some extension of the rational numbers, then
\begin{eqnarray} 
R(K,K') & = & \prod_{i=0}^{m} [(m+1)x_i^m+(m-1)x_i^{m-2}+(m-n+1)x_i^{m-n} \nonumber \\
	& & +(m-n-1)x_i^{m-n-2}+1]. \label{eq2}
\end{eqnarray}
By Theorem 2.1 and Lemma 3.2, $D(F) \equiv 1 \pmod{8}$ if and only if $D(K) \equiv 1 \pmod{8}$ since
$f(1) \neq 0$. So it suffices to compute $R(K,K')$ modulo 8. From (2), We have
%
\begin{align*}
R(K,K') \equiv &1+(m+1)\sum_{i=0}^{m}x_i^m+(m-1)\sum_{i=0}^{m}x_i^{m-2} \\
& +(m-n+1)\sum_{i=0}^{m}x_i^{m-n}+(m-n-1)\sum_{i=0}^{m}x_i^{m-n-2} \\
& +(m+1)^2\sum_{i<j}x_i^mx_j^m+(m-1)^2\sum_{i<j}x_i^{m-2}x_j^{m-2} \\
& +(m-n+1)^2\sum_{i<j}x_i^{m-n}x_j^{m-n} \\
& +(m-n-1)^2\sum_{i<j}x_i^{m-n-2}x_j^{m-n-2} \\
& +(m+1)(m-n+1)\sum_{i \neq j}x_i^mx_j^{m-n} \\
& +(m+1)(m-n-1)\sum_{i \neq j}x_i^mx_j^{m-n-2} \\
& +(m-1)(m-n+1)\sum_{i \neq j}x_i^{m-2}x_j^{m-n} 
\end{align*}
\begin{align*}
& \quad +(m-1)(m-n-1)\sum_{i \neq j}x_i^{m-2}x_j^{m-n-2} \pmod{8}.
\end{align*}
We adopt the symbols $S_k$ and $S_{p,q}$ in Sect. 2. Note that the number of roots is $m+1$ in this case.
Applying Lemma 2.2, we have
%
\begin{align}
&R(K,K') \equiv 1+(m+1)S_m+(m-1)S_{m-2}+(m-n+1)S_{m-n}\nonumber\\ 
& \quad +(m-n-1)S_{m-n-2}+\frac{1}{2}(m+1)^2S_{m,m}\nonumber \\
& \quad +\frac{1}{2}(m-1)^2S_{m-2,m-2}+\frac{1}{2}(m-n+1)^2S_{m-n,m-n}\nonumber\\
& \quad +\frac{1}{2}(m-n-1)^2S_{m-n-2,m-n-2}+(m+1)(m-n+1)S_{m,m-n}\nonumber\\
& \quad +(m+1)(m-n-1)S_{m,m-n+2}+(m-1)(m-n+1)S_{m-2,m-n}\nonumber\\
& \quad +(m-1)(m-n-1)S_{m-2,m-n-2}\nonumber\\
& \equiv  1+(m+1)S_m+(m-1)S_{m-2}+(m-n+1)S_{m-n} \nonumber\\
& \quad +(m-n-1)S_{m-n-2}+\frac{1}{2}(m+1)^2S_m^2 \nonumber\\
& \quad -\frac{1}{2}(m+1)^2S_{2m}+\frac{1}{2}(m-1)^2S_{m-2}^2 \\
& \quad -\frac{1}{2}(m-1)^2S_{2m-4}+\frac{1}{2}(m-n+1)^2S_{m-n}^2 \nonumber\\
& \quad -\frac{1}{2}(m-n+1)^2S_{2m-2n}+\frac{1}{2}(m-n-1)^2S_{m-n-2}^2 \nonumber \\
%
& \quad -\frac{1}{2}(m-n-1)^2S_{2m-2n-4}+(m+1)(m-n+1)S_mS_{m-n} \nonumber\\
& \quad -(m+1)(m-n+1)S_{2m-n}+(m+1)(m-n-1)S_mS_{m-n-2} \nonumber\\
& \quad -(m+1)(m-n-1)S_{2m-n-2}+(m-1)(m-n+1)S_{m-2}S_{m-n} \nonumber\\
& \quad -(m-1)(m-n+1)S_{2m-n-2}+(m-1)(m-n-1)S_{m-2}S_{m-n-2}  \nonumber \\
& \quad -(m-1)(m-n-1)S_{2m-n-4} \pmod{8}.\nonumber
\end{align}
%
All coefficients of the polynomial $K(x)$ are zero except
$a_0=a_2=a_n=a_{n+2}=a_m=a_{m+1}=1$, where $a_k$ is the
coefficient of the term with degree $m-k+1$.
In order to compute each term of (3), we apply Theorem 2.2 to get
\begin{align}
&S_0 = m+1 \nonumber\\
&S_1 = -a_1 = 0 \nonumber\\
&S_2 = -(S_1a_1+2a_2) = -2 \nonumber\\
&S_k+S_{k-2} = 0, ~ 2<k<n \nonumber\\
&S_n+S_{n-2} = -n \nonumber\\
&S_{n+1}+S_{n-1} = 0 \\
&S_{n+2}+S_n = -n \nonumber\\
&S_k+S_{k-2}+S_{k-n}+S_{k-n-2} = 0, ~ n+2<k<m \nonumber\\
&S_m+S_{m-2}+S_{m-n}+S_{m-n-2} = -m \nonumber\\
&S_{m+1}+S_{m-1}+S_{m-n+1}+S_{m-n-1} = -m-1 \nonumber\\
&S_k+S_{k-2}+S_{k-n}+S_{k-n-2}+S_{k-m}+S_{k-m-1} = 0, k>m+1.\nonumber
\end{align}
%
From (4), we see that $S_m=-m$ and $S_k=0$ for all odd number $k<m$, which is
followed by $S_{m-2}=S_{m-n}=S_{m-n-2}=0$. If we observe $S_k$ modulo 4
for even number $k$, then
\begin{align*}
&S_0 \equiv  m+1 \pmod{4} \\
&S_2 \equiv 2 \pmod{4} \\
&S_k+S_{k-2} \equiv 0 \pmod{4}, 2<k<n, 2 \mid k \\
&S_n+S_{n-2} \equiv n \pmod{4} \\
&S_{n+2}+S_n \equiv n \pmod{4} \\
&S_k+S_{k-2} \equiv S_{k-n}+S_{k-n-2} \pmod{4}, n+2<k<m, 2 \mid k \\
&S_k+S_{k-2} \equiv S_{k-n}+S_{k-n-2}+S_{k-m-1} \pmod{4}, m<k<2m, 2 \mid k \\
&S_{2m}+S_{2m-2} \equiv S_{2m-n}+S_{2m-n-2}-S_m+S_{m-1} \pmod{4},
\end{align*}
which shows that $S_{2m}$ is odd and $S_k$ is even for each
even number $k<2m$. Therefore, (3) can be simplified
as follows:
\begin{eqnarray}
R(K,K') & \equiv & 1-m(m+1)+\frac{1}{2}m^2(m+1)^2-\frac{1}{2}(m+1)^2S_{2m} \nonumber\\
	& & -\frac{1}{2}(m-1)^2S_{2m-4}-\frac{1}{2}(m-n+1)^2S_{2m-2n} \\
	& & -\frac{1}{2}(m-n-1)^2S_{2m-2n-4} \pmod{8}.\nonumber
\end{eqnarray}  \par
\bigskip
\noindent \textbf{CASE 1:} $n \equiv 0 \pmod{4}$ and $m+1 \equiv 0 \pmod{4}$ \par
\bigskip
In (5), $-\frac{1}{2}(m+1)^2S_{2m}$ and $-\frac{1}{2}(m-n+1)^2S_{2m-2n}$ become extinct, 
so we need to compute only $-\frac{1}{2}(m-1)^2S_{2m-4}$ and $-\frac{1}{2}(m-n-1)^2S_{2m-2n-4}$ modulo 8. 
Since $S_k \equiv 2 \pmod{4}$ for each even number $k(0<k<m)$, we get
\begin{equation*}
S_{m+k}+S_{m+k-2} \equiv S_{m+k-n}+S_{m+k-n-2}+2 \pmod{4}
\end{equation*}
for each odd number $k(1<k<m)$. Thus
\begin{eqnarray*}
S_{2m-4}+S_{2m-n-4} &\equiv& S_{2m-6}+S_{2m-n-6}+2 \\
	 & \equiv& \cdots \equiv S_{m-1}+S_{m-n+1}+2 \cdot \frac{m-3}{2} \pmod{4}.
\end{eqnarray*}
In similar way, we can easily check that 
\begin{equation*}
S_{2m-n-4}+S_{2m-2n-4} \equiv S_{m-1}+S_{m-n-1}+2 \cdot \frac{m-n-3}{2} \pmod{4}.
\end{equation*}
Therefore
\begin{equation*}
S_{2m-4}+S_{2m-2n-4} \equiv 0 \pmod{4}
\end{equation*}
and
\begin{eqnarray*}
D(K) & \equiv & (-1)^{m(m+1)/2}R(K,K') \\
	& \equiv & 1-m(m+1)-2(S_{2m-4}+S_{2m-2n-4}) \\
	& \equiv &1-m(m+1) \pmod{8}.
\end{eqnarray*} \par
\bigskip
\noindent \textbf{CASE 2:} $n \equiv 0 \pmod{4}$ and $m+1 \equiv 2 \pmod{4}$ \par
\bigskip
Similarly, we need to compute only $-\frac{1}{2}(m+1)^2S_{2m}$ 
and $-\frac{1}{2}(m-n+1)^2S_{2m-2n}$ in (5) modulo 8. We have
\begin{equation*}
S_{2m}+S_{2m-n} \equiv S_{m-1}+S_{m-n-1}-S_m+2 \cdot \frac{m+1}{2} \pmod{4}
\end{equation*}
and 
\begin{equation*}
S_{2m-n}+S_{2m-2n} \equiv S_{m-1}+S_{m-n-1}+2 \cdot \frac{m-n+1}{2} \pmod{4}.
\end{equation*}
Thus we have
\begin{equation*}
S_{2m}+S_{2m-2n} \equiv -S_m+n \equiv -S_m \equiv 1 \pmod{4},
\end{equation*}
which is followed by $D(K) \equiv 1+m(m+1)-2m^2 \pmod{8}$. \par
\bigskip
\noindent \textbf{CASE 3:} $n \equiv 2 \pmod{4}$ and $m+1 \equiv 0 \pmod{4}$ \par
\bigskip
\indent Assume that $k$ is an even number with $k<m$. Then we see easily
\begin{equation*}
S_k \equiv \left\{ \begin{array}{ll}
	0, & k \equiv 0 \pmod{n}\\
	2, & otherwise
	\end{array} \right.
\pmod{4}
\end{equation*}
and
\begin{equation*}
S_k+S_{k-2} \equiv \left\{ \begin{array}{ll}
	2, & k \equiv 0, 2 \pmod{n} \\
	0, & otherwise
	\end{array} \right.
\pmod{4}.
\end{equation*}
From (4), we have
\begin{align*}
&S_{m+1+ln}+S_{m-1+ln} \equiv S_{m-n+1}+S_{m-n-1} \pmod{4} \\
&S_{m+3+ln}+S_{m+1+ln} \equiv 2(l+1)+S_{m-n+3}+S_{m-n+1} \pmod{4} \\
& \qquad  \cdots	\qquad  \cdots \\
&S_{m-1+n+ln}+S_{m-3+n+ln} \equiv 2(l+1)+S_{m-1}+S_{m-3} \pmod{4}.
\end{align*}
Adding all these equations we get
\begin{equation*}
S_{m-1+ln}+S_{m-1+(l+1)n} \equiv S_{m-1}+S_{m-n-1} \pmod{4}
\end{equation*}
and thus
\begin{equation*}
S_k+S_{k-2n} \equiv 0 \pmod{4}
\end{equation*}
for each even number $k$ with $m+1<k<2m$. Therefore
\begin{eqnarray*}
&&S_{2m-2n}+S_{2m-4} \equiv S_{2m-2n}+S_{2m-2n-4} \\
	 && \quad \equiv  S_{2m-2n}+S_{2m-2n-2}+S_{2m-2n-2}+S_{2m-2n-4} \pmod{4},
\end{eqnarray*}
which depends on the residue 
$2m ~\textnormal{mod} ~n$. Now one can easily check that if $n \neq 6$, then
\begin{equation*}
S_{2m-4}+S_{2m-2n} \equiv \left\{ \begin{array}{ll}
	2, & 2m ~\textnormal{mod} ~n \leq 6 \\
	0, & 2m ~\textnormal{mod} ~n>6
\end{array} \right. \pmod{4}
\end{equation*}
and if $n=6$, then
\begin{equation*}
S_{2m-4}+S_{2m-2n} \equiv \left\{ \begin{array}{ll}
	2, & 2m ~\textnormal{mod} ~n > 0 \\
	0, & 2m ~\textnormal{mod} ~n = 0
\end{array} \right. \pmod{4}.
\end{equation*}
Thus the discriminant 
\begin{equation*}
D(K) \equiv 1-m(m+1)-2(S_{2m-4}+S_{2m-2n}) \pmod{8}
\end{equation*}
can be computed. \par
\bigskip
\textbf{CASE 4:} $n \equiv 2 \pmod{4}$ and $m+1 \equiv 2 \pmod{4}$ \par
\bigskip
\indent In similar way to above case, we see that if $n \neq 6$, then
\begin{equation*}
S_{2m}+S_{2m-2n-4} \equiv \left\{ \begin{array}{ll}
	3, & 2m ~\textnormal{mod} ~n \leq 6 \\
	1, & 2m ~\textnormal{mod} ~n>6
\end{array} \right. \pmod{4}
\end{equation*}
and if $n=6$, then
\begin{equation*}
S_{2m}+S_{2m-2n-4} \equiv \left\{ \begin{array}{ll}
	3, & 2m ~\textnormal{mod} ~n > 0 \\
	1, & 2m ~\textnormal{mod} ~n = 0
\end{array} \right. \pmod{4}.
\end{equation*}
And the discriminant is
\begin{equation*}
D(K) \equiv -1+m(m+1)-2m^2+2(S_{2m}+S_{2m-2n-4}) \pmod{8}.
\end{equation*}	
	
Summarizing the above argument, we obtain the following theorem.
\begin{thr}
Suppose $m$ is odd and $n>2$. Then the pentanomial $f(x)$ in (1) 
has an even number of irreducible factors over $\F_2$ if and
only if one of the following conditions holds :\\
\indent \textnormal{(1)} $n \equiv 0 \pmod{4}$ and $m \equiv \pm 3 \pmod{8}$; \\
\indent \textnormal{(2)} $n \neq 6$, $n \equiv 2 \pmod{4}$ and \textnormal{(a)}
	$2m ~\textnormal{mod} ~n \leq 6, m \equiv \pm 1 \pmod{8}$, or
	\textnormal{(b)} $2m ~\textnormal{mod} ~n>6, m \equiv \pm 3 \pmod{8}$; \\
\indent \textnormal{(3)} $n=6$ and \textnormal{(c)} $2m ~\textnormal{mod} ~n=0, m \equiv \pm 3 \pmod{8}$, or
	\textnormal{(d)} $2m ~\textnormal{mod} ~n \neq 0, m \equiv \pm 1 \pmod{8}$.
\end{thr}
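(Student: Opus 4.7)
The plan is to invoke the Stickelberger-Swan theorem (Theorem 2.1) on the lift $K(x)$ of $(x+1) f^{*}(x)$ and translate the four case analyses already performed into conditions on $m \bmod 8$. Lemma 3.1 guarantees $f$ is separable; Lemma 3.2, applied with $h = f^{*}$ and $p = x+1$ (valid because $f(1)=1\ne 0$), yields $D(F)\equiv D(K)\pmod 8$. Since $m$ is odd, Theorem 2.1 then says that $f$ has an even number of irreducible factors if and only if $D(F)\not\equiv 1\pmod 8$, i.e.\ $D(K)\equiv 5\pmod 8$. So the task reduces to reading off, in each of the four established cases, which residue classes of $m\pmod 8$ satisfy this congruence.

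For $n\equiv 0\pmod 4$, I combine the formula $D(K)\equiv 1-m(m+1)\pmod 8$ from Case 1 (where $m\equiv 3\pmod 4$) with $D(K)\equiv 1+m(m+1)-2m^{2}\pmod 8$ from Case 2 (where $m\equiv 1\pmod 4$). A direct substitution of $m\equiv 1,3,5,7\pmod 8$ yields $D(K)\equiv 1,5,5,1\pmod 8$ respectively, so $D(K)\equiv 5\pmod 8$ exactly for $m\equiv\pm 3\pmod 8$. This gives condition (1).

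For $n\equiv 2\pmod 4$, I perform the analogous substitution on the Case 3 formula $D(K)\equiv 1-m(m+1)-2(S_{2m-4}+S_{2m-2n})\pmod 8$ and the Case 4 formula $D(K)\equiv -1+m(m+1)-2m^{2}+2(S_{2m}+S_{2m-2n-4})\pmod 8$, feeding in the modulo-$4$ sub-analysis of the two sums $S_{2m-4}+S_{2m-2n}$ and $S_{2m}+S_{2m-2n-4}$ already carried out in the excerpt. Running through $m\equiv 1,3,5,7\pmod 8$ alongside the dichotomy $2m\bmod n\leq 6$ vs.\ $2m\bmod n>6$ then gives condition (2) in the generic regime $n\ne 6$; for $n=6$ the remainder $2m\bmod n$ is automatically at most $6$, so the generic cutoff degenerates and must be re-expressed as $2m\bmod n=0$ vs.\ $2m\bmod n\ne 0$, producing condition (3).

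The main obstacle is precisely this exceptional treatment of $n=6$: in the generic branch the threshold $6$ arises from counting how many pairs $(S_{k},S_{k-2})$ contribute $\equiv 2\pmod 4$ as $k$ traverses the recursion window, and when $n$ itself equals $6$ this threshold coincides with the entire residue range, collapsing one half of the dichotomy. Verifying that the collapse really does turn the generic $\leq 6/>6$ split into a $=0/\ne 0$ split, consistently across both Cases 3 and 4, is the one spot where care (rather than mere substitution) is required; everything else is a routine residue-class check driven by the already-established Case 1--4 formulas.
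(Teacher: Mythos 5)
Your proposal is correct and follows essentially the same route as the paper: the theorem is exactly the ``summarizing'' step after Cases 1--4, obtained by combining Lemma 3.2 (with $p(x)=x+1$, legitimate since $f(1)\neq 0$) and Theorem 2.1 with the case-by-case formulas for $D(K)\bmod 8$, and your substitutions $m\equiv 1,3,5,7\pmod 8\mapsto D(K)\equiv 1,5,5,1$ and the $n=6$ degeneration match the paper's conclusion.
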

%
%
%
%
\subsection{The case of even degree}

Now let the degree $m$ of $f(x)$ in (1) be even and assume $n\geq 4$. 
Observe the monic lift of the polynomial $(x+1)f(x)$ to the integers
\begin{equation*}
L(x):=x^{m+1}+x^m+x^{n+2}+x^n+x^2+1 \in \Z[x].\\
\end{equation*}
Let $x_i$'s be the roots of $L(x)$ in some extension of the rational numbers and $S_k$'s be defined 
similarly as in Sect. 2. Then we have the following.
\begin{align*}
&R(L,L')  \equiv (m+1)^{m+1}+(m+1)^m \left[ m\sum_{i=0}^{m}x_i^{-1}+(n+2) \sum_{i=0}^{m}x_i^{n+1-m} \right.\\
	& \left. \quad +n\sum_{i=0}^{m}x_i^{n-1-m}+2\sum_{i=0}^{m}x_i^{1-m} \right]+ \frac{1}{2}(m+1)^{m-1} \left[m^2\sum_{i<j}x_i^{-1}x_j^{-1} \right.\\
	& \quad +(n+2)^2\sum_{i<j}x_i^{n+1-m}x_j^{n+1-m}+n^2\sum_{i<j}x_i^{n-1-m}x_j^{n-1-m} \\
	& \left. \quad +2^2\sum_{i<j}x_i^{1-m}x_j^{1-m} \right] +(m+1)^{m-1} \left[ m(n+2)\sum_{i \neq j}x_i^{-1}x_j^{n+1-m} \right. \\
	& \quad +mn\sum_{i \neq j}x_i^{-1}x_j^{n-1-m} + 2m\sum_{i \neq j}x_i^{-1}x_j^{1-m}+2(n+2)\sum_{i \neq j}x_i^{n+1-m}x_j^{1-m} \\
	& \left. \quad +2n\sum_{i \neq j}x_i^{1-m}x_j^{n-1-m}\right]
\end{align*}

\begin{align*}
	& \equiv (m+1)+[mS_{-1}+(n+2)S_{n+1-m}+nS_{n-1-m}+2S_{1-m}] \nonumber\\
	& \quad +\frac{1}{2}(m+1)[m^2S_{-1}^2+(n+2)^2S_{n+1-m}^2+n^2S_{n-1-m}^2+4S_{1-m}^2] \nonumber\\
	&\quad -\frac{1}{2}(m+1)[m^2S_{-2}+(n+2)^2S_{2n+2-2m}+n^2S_{2n-2-2m}+4S_{2-2m}] \nonumber\\
	&\quad +[m(n+2)S_{-1}S_{n+1-m}+mnS_{-1}S_{n-1-m}+2mS_{-1}S_{1-m} \nonumber\\
	&\quad +2(n+2)S_{1-m}S_{n-1-m}+2nS_{n-1-m}S_{1-m}] \nonumber\\
	&\quad -[m(n+2)S_{n-m}+mnS_{n-2-m}+2mS_{-m}+n(n+2)S_{2n-2m}+ \nonumber\\
	&\quad +2(n+2)S_{n+2-2m}+2nS_{n-2m}] \pmod{8}.
\end{align*}

%
\noindent Let $T_k:=\sum_{i=0}^{m}y_i^k$ where $y_i$'s are the roots of $L^{*}(x)$,
the reciprocal of $L(x)$  in some extension of the rational numbers. 
Then clearly $S_{-k}=T_k$. By Theorem 2.2, we get the equations for $T_k$ same as (4).
So $T_k=0$ for each odd number $k$ with $1 \leq k<m+1$ and $T_{m+1}=-m-1$.
Meanwhile, for each even number $k$ with $1 \leq k<2m$, $T_k$ is also even. \par
\bigskip
\noindent \textbf{CASE 1:} $n \equiv 0 \pmod{4}$ \par
\bigskip
\indent In this case, we see easily that $T_{2m-2n-2} \equiv T_{2m-2} \pmod{4}$.
So $T_{2m-2n-2}+T_{2m-2} \equiv 0 \pmod{4}$ since $T_{2m-2n-2}$ and
$T_{2m-2}$ are all even. Therefore, we have
$D(L) \equiv (-1)^{m(m+1)/2}(m+1)(1+m^2) \pmod{8}$. \par
\bigskip
\noindent \textbf{CASE 2:} $n \equiv 2 \pmod{4}$ \par
\bigskip
\indent It is not difficult to check in similar way that if $n \neq 6$, then
\begin{equation*}
T_{2m-2n+2}+T_{2m-2n-2} \equiv \left\{ \begin{array}{ll}
	2, & 2m+4 ~\textnormal{mod} ~n \leq 6 \\
	0, & 2m+4 ~\textnormal{mod} ~n>6
\end{array} \right. \pmod{4}
\end{equation*}
and if $n=6$, then
\begin{equation*}
T_{2m-2n+2}+T_{2m-2n-2} \equiv \left\{ \begin{array}{ll}
	2, & 2m+4 ~\textnormal{mod} ~n > 0 \\
	0, & 2m+4 ~\textnormal{mod} ~n = 0
\end{array} \right. \pmod{4}.
\end{equation*}
Thus the discriminant
\begin{equation*}
D(L) \equiv (-1)^{m(m+1)/2} (m+1)[1+m^2-2(T_{2m-2n+2}+T_{2m-2n-2})] \pmod{8}
\end{equation*}
can be computed. \par
Summarizing the above consideration implies the following theorem.
\begin{thr}
Suppose $m$ is even and $n>2$. Then $f(x)$ in (1) 
has an even number of irreducible factors over $\F_2$ if and
only if one of the following conditions holds :\\
\indent \textnormal{(1)} $n \equiv 0 \pmod{4}$ and $m \equiv 0,2 \pmod{8}$; \\
\indent \textnormal{(2)} $n \neq 6$, $n \equiv 2 \pmod{4}$ and \textnormal{(a)}
	$2m+4 ~\textnormal{mod} ~n \leq 6, m \equiv 4, 6 \pmod{8}$, or
	\textnormal{(b)} $2m+4 ~\textnormal{mod} ~n>6, m \equiv 0, 2 \pmod{8}$; \\
\indent \textnormal{(3)} $n=6$ and \textnormal{(c)} $2m+4 ~\textnormal{mod} ~n=0, m \equiv 0, 2 \pmod{8}$, or
	\textnormal{(d)} $2m+4 ~\textnormal{mod} ~n \neq 0, m \equiv 4, 6 \pmod{8}$. \\
\end{thr}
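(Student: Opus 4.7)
The plan is to combine the case-by-case expressions for $D(L) \bmod 8$ derived in Section~3.2 with Theorem~2.1 and Lemma~3.2 to read off the claimed conditions. First I would verify the admissibility of $L(x) = (x+1)f(x)$ for Lemma~3.2: by Lemma~3.1 the polynomial $f$ is square-free, and since $f(1) = 1$ in $\F_2$, $x+1$ divides neither $f$ nor $f^*$. Hence $D(F) \equiv D(L) \pmod{8}$. With $m$ even, Theorem~2.1 gives that $f$ has an even number of irreducible factors if and only if $r \equiv m \pmod{2}$, i.e.\ if and only if $D(L) \equiv 1 \pmod{8}$; so it remains to pin down exactly when this holds.

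For Case~1 ($n \equiv 0 \pmod{4}$), the Section~3.2 formula reads $D(L) \equiv (-1)^{m(m+1)/2}(m+1)(1+m^2) \pmod{8}$, which is independent of $n$. I would tabulate this over $m \bmod 8 \in \{0, 2, 4, 6\}$: the sign $(-1)^{m(m+1)/2}$ equals $+1$ exactly when $m \equiv 0, 4 \pmod{8}$; the factor $m+1 \pmod{8}$ runs through $1, 3, 5, 7$; and $1+m^2 \pmod{8}$ alternates $1, 5, 1, 5$. Multiplying, the product reduces to $1, 1, 5, 5 \pmod{8}$ respectively, which yields condition~(1).

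For Case~2 ($n \equiv 2 \pmod{4}$) there is an additional contribution $-2(T_{2m-2n+2}+T_{2m-2n-2})$ inside the bracket, multiplied by the odd factor $(-1)^{m(m+1)/2}(m+1)$. Hence only $T_{2m-2n+2}+T_{2m-2n-2} \pmod{4}$ matters, and by the two sub-formulas established in Section~3.2 this sum is either $0$ or $2 \pmod{4}$. I would cross-tabulate the four residues of $m \bmod 8$ against these two possibilities. In the sub-case $T+T \equiv 0 \pmod{4}$ the Case~1 table reappears verbatim, so $D(L) \equiv 1 \pmod{8}$ iff $m \equiv 0, 2 \pmod{8}$; in the sub-case $T+T \equiv 2 \pmod{4}$, the extra $-4$ inside the bracket flips the final answer, giving $D(L) \equiv 1 \pmod{8}$ iff $m \equiv 4, 6 \pmod{8}$. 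Translating the two sub-cases back to the condition on $2m+4 \bmod n$ (with the separate $n=6$ branch, in which the ``$\le 6$ vs $>6$'' split degenerates to ``$=0$ vs $\neq 0$'') produces exactly conditions~(2) and~(3).

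The main obstacle will be the bookkeeping in Case~2: four residues of $m \bmod 8$ against two values of $T+T \pmod{4}$, with the additional $n=6$ exception where the small/large distinction collapses. Care is also needed to track the sign $(-1)^{m(m+1)/2}$ and the parity pattern of $(m+1)(1+m^2) \pmod{8}$ correctly at each residue, so that the claimed partition of residues into ``even number of factors'' and ``odd number of factors'' aligns with the structure of the theorem statement. Once this table is laid out, reading off conditions (1)--(3) from the rows with $D(L) \equiv 1 \pmod{8}$ is mechanical.
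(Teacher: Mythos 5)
Your proposal is correct and takes essentially the same route as the paper: it reduces to $D(L)\bmod 8$ via Lemma 3.2 (checking $f(1)\neq 0$ and square-freeness), and then tabulates the sign $(-1)^{m(m+1)/2}$, the factor $m+1$, and $1+m^2$ over $m\bmod 8$, with the $-2(T_{2m-2n+2}+T_{2m-2n-2})$ term flipping the residue classes exactly as in the paper's case analysis. The arithmetic in your tables checks out, so nothing further is needed.
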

%
%
%
%
\subsection{The case of $n=2$} 

In this subsection we consider the parity of the number of irreducible factors for the pentanomial
\begin{equation}
f(x)=x^m+x^3+x^2+x+1
\end{equation}
over $\F_2$. First assume that $m$ is odd. Then the monic lift of the polynomial $(x+1)f^*(x)$ to the integers is
\begin{equation*}
K=x^{m+1}+x^{m-3}+x+1
\end{equation*}
and 
\small
\begin{eqnarray*}
R(K,K') &\equiv &1+(m+1)S_m+(m-3)S_{m-4}\\
	& & +\frac{1}{2}(m+1)^2(S_m^2-S_{2m})+\frac{1}{2}(m-3)^2(S_{m-4}^2-S_{2m-8})\\
	& & +(m+1)(m-3)(S_mS_{m-4}-S_{2m-4}) \pmod{8}.
\end{eqnarray*}
\normalsize
Compute $S_k$'s using Lemma 2.2 and Theorem 2.2. Then we have
\small
\begin{eqnarray*}
R(K,K') &\equiv & 1-m(m+1)+\frac{1}{2}m^2(m+1)^2-\frac{1}{2}(m+1)^2S_{2m}\\
	& & -\frac{1}{2}(m-3)^2S_{2m-8}-(m+1)(m-3)S_{2m-4} \pmod{8}
\end{eqnarray*}
\normalsize
and therefore 
\begin{eqnarray}
D(K) & \equiv & \left\{ \begin{array}{ll}
		5\pmod{8}, & m \equiv \pm 3 \pmod{8} \\
		1\pmod{8}, & m \equiv \pm 1 \pmod{8}.
	\end{array} \right. 
\end{eqnarray}

Next assume that $m$ is even. Then the monic lift of the polynomial $(x+1)f(x)$ to the integers is
\begin{equation*}
L=x^{m+1}+x^m+x^4+1
\end{equation*}
and similarly we have
\begin{eqnarray}
D(L) & \equiv & \left\{ \begin{array}{ll}
		5\pmod{8}, & m \equiv 2, 4 \pmod{8} \\
		1\pmod{8}, & m \equiv 0, 6 \pmod{8}.
	\end{array} \right. 
\end{eqnarray}
By (7), (8) and Lemma 3.2, we have the following theorem.
\begin{thr}
The polynomial $f(x)$ in (6) has an even number of irreducible factors over $\F_2$ if and
only if $m \equiv 0, 3, 5, 6 \pmod{8}$.
\end{thr}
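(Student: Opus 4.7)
The plan is to mirror the strategy of Subsections 3.1 and 3.2 adapted to the simpler polynomial $f(x)=x^m+x^3+x^2+x+1$, in which only four coefficients of its monic integer lift are nonzero. Concretely, I will (i) use Lemma 3.2 to transfer the computation of $D(F)\pmod 8$ to the lift of $(x+1)f^*(x)$ or $(x+1)f(x)$, (ii) expand the resulting resultant modulo $8$ using Lemma 2.2 as a combination of power sums, (iii) evaluate those power sums modulo $4$ via Newton's formula (Theorem 2.2), and (iv) translate the result through Theorem 2.1.

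Since $f(1)=1\neq 0$ in $\F_2$ and $x+1$ is an irreducible polynomial of odd degree, Lemma 3.2 applies: passing from $f$ to $(x+1)f^*$ (or $(x+1)f$) preserves both the parity of the number of irreducible factors and the discriminant modulo $8$. A direct multiplication in $\F_2$ confirms that the relevant lifts are $K(x)=x^{m+1}+x^{m-3}+x+1$ when $m$ is odd and $L(x)=x^{m+1}+x^m+x^4+1$ when $m$ is even, so it remains only to establish (7) and (8).

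For odd $m$, $K'(x_i)=1+(m+1)x_i^m+(m-3)x_i^{m-4}$. Because $m+1$ and $m-3$ are even, every triple-product term in the expansion of $\prod_i K'(x_i)$ is divisible by $8$, so only the linear and bilinear contributions survive. Lemma 2.2 converts these into a combination of $S_m,S_{m-4},S_{2m},S_{2m-4},S_{2m-8}$. The only nonzero coefficients of $K$ are $a_4=a_m=a_{m+1}=1$, so Newton's formula forces $S_p=0$ for every odd $p<m$ and $S_m\equiv -m\pmod 8$; in particular $S_{m-4}=0$. The remaining task is to evaluate $S_{2m},S_{2m-4},S_{2m-8}$ modulo $4$ by iterating the four-term recurrence $S_p+S_{p-4}+S_{p-m}+S_{p-m-1}=0$ valid for $p>m+1$, and substitution into the expansion yields (7). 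The even-$m$ case is entirely parallel: one works with $L$ and the reciprocal power sums $T_k=\sum y_i^k$ exactly as in Subsection 3.2, reducing to a short evaluation of a few $T_k$'s modulo $4$ to obtain (8).

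To conclude, one combines (7), (8) and Theorem 2.1: when $m$ is odd, $r$ is even iff $D(F)\not\equiv 1\pmod 8$, i.e., $D(F)\equiv 5\pmod 8$, which by (7) is $m\equiv\pm 3\pmod 8$; when $m$ is even, $r$ is even iff $D(F)\equiv 1\pmod 8$, which by (8) is $m\equiv 0,6\pmod 8$. Together these give the list $m\equiv 0,3,5,6\pmod 8$. The main obstacle is the precise bookkeeping of $S_{2m},S_{2m-4},S_{2m-8}$ (and the analogous $T_k$'s in the even case) modulo $4$, where the recursion begins to braid contributions from the widely separated nonzero coefficients near indices $4$ and $m$. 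Unlike in Subsections 3.1 and 3.2, however, no residues of $2m$ modulo $n=2$ need to be distinguished beyond parity, which is why the final statement is considerably cleaner than Theorems 3.1 and 3.2.
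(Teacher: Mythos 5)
Your proposal follows essentially the same route as the paper: the same auxiliary lifts $K(x)=x^{m+1}+x^{m-3}+x+1$ and $L(x)=x^{m+1}+x^m+x^4+1$ obtained via Lemma 3.2, the same expansion of $R(K,K')$ modulo $8$ into power sums using the evenness of $m+1$ and $m-3$, the same Newton-recurrence evaluation of the $S_k$'s (and $T_k$'s) leading to (7) and (8), and the same final translation through Theorem 2.1. The level of detail you leave implicit in evaluating $S_{2m}$, $S_{2m-4}$, $S_{2m-8}$ modulo $4$ matches the paper's own terseness at that point, and your concluding case split ($m\equiv\pm3\pmod 8$ for odd $m$, $m\equiv 0,6\pmod 8$ for even $m$) correctly yields the stated list.
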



%
%
\section{Conclusion}
We have completely determined the parity of the number of irreducible factors for Type I
pentanomials (1) when $n$ is even. Our discussion is based on the 
Stickelberger-Swan theorem and somewhat complicated computation. In \cite{koe}, 
Type II pentanomials of even degrees were studied.
The results for Type II pentanomials of odd degrees with $n>2$ and Type I pentanomials
with odd $n$ still remain open.\\
	
{\bf Acknowledgement}. We would like to thank anonymous referees for their valuable comments and suggestions. 
%
%

\noindent Faculty of Mathematics, \textbf{Kim Il Sung} University, Kumsong Street, \\
Pyongyang, Democratic People's Republic of Korea, \\
e-mail address: ryul\_kim@yahoo.com \\
e-mail address: paksuyong@yahoo.com \\
e-mail address: sinmyongson@yahoo.com 

\end{document}